\DeclarePairedDelimiterX{\inp}[2]{\langle}{\rangle}{#1, #2}
\newtheorem{prethm}{{\bf Theorem}}
\newenvironment{thm}{\begin{prethm}{\hspace{ - 0.5
				em}}}{\end{prethm}}
\newtheorem{preconj}[prethm]{Conjecture}
\newenvironment{conj}{\begin{preconj}{\hspace{ - 0.5
				em}}}{\end{preconj}}
\newtheorem{preproblem}{{\bf problem}}
\newcommand{\Gb}{{\overline G}}
\newcommand{\Hb}{{\overline H}}
\title{\large \bf The Algebraic Connectivity of a Graph and its Complement}
\author{{\normalsize
		{\sc B. Afshari${}^{\mathsf{a}}$},\,
		{\sc S. Akbari${}^{\mathsf{b}}$}
		\thanks{The research of the second author was partly funded by Iran National Science Foundation (INSF) under the contract No. 96004167.} ,\,
		{\sc M.J. Moghaddamzadeh${}^{\mathsf{b}}$},\,
		{\sc B. Mohar${}^{\mathsf{c}}$}
		\thanks{B.M.~was supported in part by the NSERC Discovery Grant R611450 (Canada), by the Canada Research Chairs program, and by the Research Project J1-8130 of ARRS (Slovenia).}
		\thanks{On leave from IMFM \& FMF, Department of Mathematics, University of Ljubljana.} \,
	}
	\vspace{3mm}
	\\{\footnotesize{${}^{\mathsf{a}}$\it School of Computer Science, Institute for Research in Fundamental Sciences,}} {\footnotesize{}}\\
	{\footnotesize{${}^{\mathsf{b}}$\it Department of Mathematical Sciences, Sharif University of Technology, Tehran,
			Iran,}}{\footnotesize{}}\\
	{\footnotesize{${}^{\mathsf{c}}$\it Department of Mathematics, Simon Fraser University, Burnaby, BC \ V5A 1S6}}{\footnotesize{}}
}
\date{}
\begin{document}

\maketitle

\begin{abstract}
	For a graph $G$, let $\lambda_2(G)$ denote its second smallest Laplacian eigenvalue.
	It was conjectured that $\lambda_2(G) + \lambda_2(\Gb) \ge 1$, where $ \Gb $ is the complement of $G$.
	In this paper, it is shown that $\max\left\{\lambda_2(G), \lambda_2(\Gb)\right\} \ge \frac{2}{5}$.
\end{abstract}

\noindent {\em AMS Classification}: 05C50\\
\noindent{\em Keywords}: Laplacian eigenvalues of graphs, Laplacian spread

\section{Introduction}

All graphs considered in this paper are simple (no loops and no multiple edges).
If $G$ is a graph and $v\in V(G)$, we denote by $ N_{G}(v) $ the set of vertices adjacent to $ v $.
We denote the complementary graph of $ G $ by $ \Gb $.

The adjacency matrix $A(G)$ of $G$ is the matrix whose $(u,v)$-entry is equal to $1$ if $uv\in E(G)$ and $0$ otherwise.
If $D(G)$ denotes the diagonal matrix of vertex degrees, then the Laplacian of the graph $G$ is defined as $L(G) = D(G) - A(G)$. We denote the Laplacian eigenvalues of $G$ by
$$0=\lambda_1(G) \le \lambda_2(G) \le \cdots \le \lambda_n(G).$$
The second smallest eigenvalue $\lambda_2(G)$ is also called the \textit{algebraic connectivity} of $ G $ and is an important indicator related to various properties of the graph.
It is well-known that the eigenvalues of the complementary graph $L(\Gb)$ are
$$0 = \lambda_1(\Gb) \le n - \lambda_n(G) \le n - \lambda_{n - 1}(G) \le \ldots \le n - \lambda_{2}(G).$$


The \textit{Laplacian spread} of a graph $ G $ is defined to be $\lambda_n(G) - \lambda_2(G)$.
Clearly, $\lambda_n(G) - \lambda_2(G) \le n$. It was conjectured \cite{yl,zsh} that this quantity is at most $n-1$.

\begin{conj}
	For any graph $ G $ of order $ n\ge2 $, the following holds:
	$$\lambda_n(G) - \lambda_2(G) \le n-1,$$
	or equivalently $\lambda_2(G) + \lambda_2(\Gb) \ge 1$, with equality if and only if $ G $ or $ \Gb $ is isomorphic to the join of an isolated vertex and a disconnected graph of order $ n-1 $.
\end{conj}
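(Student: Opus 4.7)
My plan begins with the equivalence: since $L(G) + L(\Gb) = nI - J$ and both matrices vanish on the all-ones vector, their restrictions to $\1^\perp$ are $L(G)|_{\1^\perp}$ and $nI - L(G)|_{\1^\perp}$, so $\lambda_2(\Gb) = n - \lambda_n(G)$ and the spread bound $\lambda_n(G) - \lambda_2(G) \le n - 1$ is the same as $\lambda_2(G) + \lambda_2(\Gb) \ge 1$. I will prove the latter, split into cases by connectivity.

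Suppose first that $G$ is disconnected with components of orders $n_1 \ge n_2 \ge \cdots \ge n_k$, $k \ge 2$. Then $\lambda_2(G) = 0$ and $\Gb$ contains the complete multipartite graph $K_{n_1,\ldots,n_k}$ as a spanning subgraph. Its Laplacian spectrum — obtained by complementing $\bigsqcup_i K_{n_i}$ — gives $\lambda_2(K_{n_1,\ldots,n_k}) = n - n_1$, and Laplacian edge-monotonicity (since $L(\Gb) - L(K_{n_1,\ldots,n_k}) \succeq 0$) yields $\lambda_2(\Gb) \ge n - n_1 \ge 1$. For equality, $n_1 = n - 1$, so $G = G_1 \cup K_1$; the classical join formula applied to $\Gb = \overline{G_1} \vee K_1$ gives $\lambda_2(\Gb) = 1 + \lambda_2(\overline{G_1})$, and equality further forces $\overline{G_1}$ disconnected, recovering the extremal family in which $\Gb$ is the join of an isolated vertex and a disconnected graph of order $n - 1$. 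The case $\Gb$ disconnected is symmetric.

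The principal case is when both $G$ and $\Gb$ are connected (so neither is in the extremal family and strict inequality is expected). I would argue by contradiction on a counterexample $G$ of minimum order, assuming $\lambda_2(G) + \lambda_2(\Gb) < 1$. Let $y$ be a unit Fiedler vector of $G$ and $x$ be a unit eigenvector of $L(G)$ for $\lambda_n(G)$; the identity above makes $x$ automatically a Fiedler vector of $\Gb$ for $\lambda_2(\Gb) = n - \lambda_n(G)$, and I may take $x \perp y$. Combining Fiedler's nodal-domain theorem (the strict sign components of $y$ induce connected subgraphs of $G$, and those of $x$ induce connected subgraphs of $\Gb$) with the Rayleigh-quotient identities $\lambda_2(G) = \sum_{uv \in E(G)}(y_u - y_v)^2$ and $\lambda_2(\Gb) = \sum_{uv \in E(\Gb)}(x_u - x_v)^2$, the strategy is to identify a vertex $v$ at which both $|y_v|$ and $|x_v|$ are small, so that vertex-deletion preserves $\lambda_2(G - v) + \lambda_2(\Gb - v) < 1$, contradicting the minimality. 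The claimed equality characterization then follows, since the only way to achieve $\lambda_2(G) + \lambda_2(\Gb) = 1$ is the disconnected case analyzed above.

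The main obstacle is this simultaneous-smallness selection of $v$: the coupling $L(G) + L(\Gb) = nI - J$ ties $x$ and $y$ together so that a vertex where $|y_v|$ is small typically has $|x_v|$ large, and vice versa. Laplacian vertex-deletion interlacing is itself too loose — $\lambda_2(G - v)$ can rise as high as $\lambda_3(G)$ — so closing the argument demands a careful combinatorial reduction, sequentially eliminating pendant vertices, twin pairs, and near-join configurations, and showing that any minimum counterexample either admits a reducible vertex or coincides with the extremal family $K_1 \vee H$ (whose complement is disconnected, hence falls outside the main case). This combinatorial-spectral case analysis is the delicate heart of the proof, and the reason the sharp constant $1$ has resisted arguments that rely solely on Cheeger, Fiedler's $\lambda_2 \le \kappa$, or degree-type bounds, all of which leak a constant factor.
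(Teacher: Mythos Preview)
The statement you are attempting to prove is a \emph{conjecture} in the paper, not a theorem; the paper does not claim to prove it and offers no proof of it. The paper's contribution is the weaker bound $\max\{\lambda_2(G),\lambda_2(\Gb)\}\ge \tfrac{2}{5}$ (Theorem~\ref{thm:main}), established via routings and weighted congestion. So there is no ``paper's own proof'' to compare against here.

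On the substance of your attempt: the equivalence paragraph and the disconnected case are correct and standard. If $G$ is disconnected with largest component of order $n_1$, then indeed $\lambda_2(\Gb)\ge n-n_1\ge 1$, and your analysis of the equality case via the join formula $\lambda_2(K_1\vee H)=1+\lambda_2(H)$ recovers exactly the stated extremal family.

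The genuine gap is in the principal case where both $G$ and $\Gb$ are connected. What you have written there is not a proof but a description of a strategy, and you yourself identify the obstruction: vertex-deletion interlacing only gives $\lambda_2(G-v)\le \lambda_3(G)$, which is far too weak, and the coupling $L(G)+L(\Gb)=nI-J$ prevents you from finding a vertex where both Fiedler coordinates are simultaneously small. Your final paragraph promises a ``careful combinatorial reduction'' eliminating pendants, twins, and near-joins, but you do not carry any of it out, nor do you indicate why such a reduction terminates or why the residual class of graphs would be tractable. As written, the connected-connected case is entirely unproven, and this is precisely the case that makes the conjecture hard; the known partial results (trees, unicyclic, diameter $\ne 3$, etc.) all succeed by sidestepping it via structural hypotheses. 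Your proposal does not close that gap.
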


This conjecture holds for trees \cite{fxwl}, unicyclic graphs \cite{btf}, bicyclic graphs \cite{flt,lsl,lw}, tricyclic graphs \cite{cw}, cactus graphs \cite{l}, quasi-tree graphs \cite{xm}, graphs with diameter not equal to $3$ \cite{zsh}, bipartite and $K_3$-free graphs \cite{cd}.

In this paper, we provide a positive constant lower bound for $\lambda_2(G) + \lambda_2(\Gb)$ by proving the following.

\begin{thm}\label{thm:main}
	Let $G$ be a graph of order $n\ge2$. Then
	$$ \max\left\{\lambda_2(G), \lambda_2(\Gb)\right\} \ge \frac{2}{5}.$$
\end{thm}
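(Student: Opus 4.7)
The plan is to argue by contradiction: suppose $\lambda_2(G) < 2/5$ and $\lambda_2(\bar G) < 2/5$, and derive a contradiction.

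First, reduce to the case where both $G$ and $\bar G$ are connected. If $G$ is disconnected with $k \ge 2$ components of orders $n_1 \ge \cdots \ge n_k \ge 1$, then $\bar G$ contains the spanning complete multipartite subgraph $K_{n_1,\ldots,n_k}$; since adding edges increases the Laplacian in the L\"owner order, and $\lambda_2(K_{n_1,\ldots,n_k}) \ge n - n_1 \ge 1$, we get $\lambda_2(\bar G) \ge 1 > 2/5$, a contradiction. Symmetrically $\bar G$ is connected. So henceforth both graphs are connected and $n \ge 4$, and every vertex has degree in $\{1, \ldots, n-2\}$ in each graph.

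Next, exploit the spectral identity $L(G) + L(\bar G) = nI - J$: on the invariant subspace $\mathbf{1}^\perp$ the two Laplacians commute and share an orthonormal eigenbasis, with paired eigenvalues $\mu \leftrightarrow n - \mu$. Choose unit Fiedler vectors $x$ of $G$ (eigenvalue $\lambda_2(G)$) and $y$ of $\bar G$ (eigenvalue $\lambda_2(\bar G)$). Then $x$ is simultaneously a $\lambda_n$-eigenvector of $\bar G$ (eigenvalue $n - \lambda_2(G) > n - 2/5$), and $y$ is a $\lambda_n$-eigenvector of $G$ (eigenvalue $n - \lambda_2(\bar G) > n - 2/5$). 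Since the hypothesis gives $\lambda_2(G) + \lambda_2(\bar G) < 4/5 < n$, the $L(G)$-eigenvalues $\lambda_2(G)$ and $n - \lambda_2(\bar G)$ are distinct, so their eigenspaces are orthogonal and one may take $\langle x, y \rangle = 0$. Fiedler's nodal-domain theorem now provides bipartitions $V = V_x^+ \sqcup V_x^-$ with $G[V_x^\pm]$ connected, and $V = V_y^+ \sqcup V_y^-$ with $\bar G[V_y^\pm]$ connected, jointly refining $V$ into four cells.

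The core of the proof is a quantitative combinatorial argument combining these structural and spectral ingredients. Using the vertex-wise equations $(L(G)x)_v = \lambda_2(G)\,x_v$ and $(L(G)y)_v = (n - \lambda_2(\bar G))\,y_v$, one controls the mass distribution of $x$ and $y$ across the four cells and bounds the edge densities of $G$ (and $\bar G$) between them. A natural route is then to exhibit an explicit test vector $w \in \mathbf{1}^\perp$ --- a carefully tuned linear combination of cell indicator vectors --- whose Rayleigh quotient for $L(G)$ or $L(\bar G)$ is strictly less than $2/5$, violating the Courant--Fischer characterization.

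I expect the main obstacle to be pinning down the precise constant $2/5$ (which lies strictly below the conjectured optimum $1/2$): it should emerge by optimizing scalar coefficients in the test-vector construction, and will likely require a delicate case analysis on the relative sizes of the four cells. Degenerate configurations --- one cell being very small, or a Fiedler eigenvalue having multiplicity greater than one --- will demand additional care.
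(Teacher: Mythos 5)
Your preliminary reductions are sound (the disconnected case via $\lambda_2(K_{n_1,\ldots,n_k}) = n - n_1 \ge 1$ and edge-monotonicity of Laplacian eigenvalues, and the simultaneous diagonalization of $L(G)$ and $L(\Gb)$ on $\mathbf{1}^\perp$), but the proof stops exactly where the theorem begins. The entire content of the result is the ``quantitative combinatorial argument'' that you describe only as a plan (``a natural route\dots'', ``I expect the main obstacle\dots''); no such argument is supplied, and no mechanism is given for extracting the constant $2/5$. Worse, the mechanism you do propose is logically inverted: having assumed $\lambda_2(G) < 2/5$ and $\lambda_2(\Gb) < 2/5$ for contradiction, exhibiting a test vector $w \in \mathbf{1}^\perp$ whose Rayleigh quotient for $L(G)$ or $L(\Gb)$ is below $2/5$ contradicts nothing --- by Courant--Fischer such vectors are guaranteed to exist under your hypothesis. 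A contradiction would require a \emph{lower} bound, i.e., showing that every vector in a suitable subspace has Rayleigh quotient at least $2/5$ for at least one of the two Laplacians, and nothing in the sketch indicates how the four nodal-domain cells would yield such an estimate.

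For comparison, the paper avoids eigenvector geometry altogether. It first proves a canonical-paths (Poincar\'e-type) inequality: if $G$ admits a routing --- exactly one path per pair of vertices --- of weighted congestion at most $w$, then $\lambda_2(G) \ge n/w$. It then shows by induction on $n$ that at least one of $G$, $\Gb$ has a routing of weighted congestion at most $5n/2$: if either graph has diameter at most $2$, one routes every pair by a path of length $1$ or $2$ with congestion at most $2n-3$; otherwise both graphs have diameter exactly $3$, and deleting a carefully chosen pair of vertices (one endpoint of a diametral pair in $G$ and one of a diametral pair in $\Gb$) allows the inductive routing to be extended at an additive cost controlled by $|X| \le (n-2)/2$. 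The constant $2/5$ is simply $n/(5n/2)$. If you wish to pursue a Fiedler-vector route you must supply the missing core estimate from scratch; as written, the proposal is a program, not a proof.
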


\section{Routings}

If $P$ is a path in a graph $G$, we denote its length by $\Vert P\Vert$.
Let $\mathcal{P}$ be a set of paths in $G$. For any edge $e\in G$, let $w_{\mathcal{P}}(e)$ be the sum of the lengths of all paths in $\mathcal{P}$ which contain $e$.
We say $\mathcal{P}$ has \emph{weighted congestion} $w$, where $w = \max_{e\in E(G)} {w_{\mathcal{P}}(e)}$.
We denote the weighted congestion of $\mathcal{P}$ by $w(\mathcal{P})$.

A set $\mathcal{P}$ of paths in $G$ is called a \textit{routing} if for any distinct vertices $x,y \in V(G)$, there is \textit{exactly one} path $P_{xy}\in \mathcal{P}$ with endpoints $x$ and $y$. In particular, this means that $P_{xy}=P_{yx}$.

\begin{thm}\label{SystemOfPaths&Lambda}
	Let $G$ be a graph of order $n$. If $G$ has a routing $\mathcal{P}$ of weighted congestion at most $w$, then $\lambda_2(G) \ge \frac{n}{w}$.
\end{thm}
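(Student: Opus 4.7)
The plan is to use the Rayleigh quotient characterization
$$\lambda_2(G) = \min_{\substack{x \perp \mathbf{1}\\ x \ne 0}} \frac{x^T L(G) x}{x^T x} = \min_{\substack{x \perp \mathbf{1}\\ x \ne 0}} \frac{\sum_{uv \in E(G)} (x_u - x_v)^2}{\sum_{v} x_v^2}.$$
So I would fix an arbitrary $x \in \mathbb{R}^{V(G)}$ with $x \perp \mathbf{1}$ and show directly that $\sum_{uv\in E(G)} (x_u-x_v)^2 \ge \frac{n}{w}\sum_v x_v^2$.

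The first key step is the well-known identity for a mean-zero vector: since $\sum_v x_v = 0$,
$$\sum_{\{u,v\}\subseteq V(G)}(x_u-x_v)^2 = \tfrac{1}{2}\sum_{u,v}(x_u-x_v)^2 = n\sum_v x_v^2.$$
This rewrites $\|x\|^2$ as a sum over unordered pairs, which is exactly what the routing is indexed by.

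Next, for each unordered pair $\{u,v\}$, the routing supplies a path $P_{uv}\in \mathcal{P}$. Writing $x_u - x_v$ as the telescoping sum $\sum_{ab\in E(P_{uv})}(x_a - x_b)$ along the edges of $P_{uv}$ and applying Cauchy--Schwarz yields
$$(x_u-x_v)^2 \le \|P_{uv}\|\sum_{ab\in E(P_{uv})}(x_a - x_b)^2.$$
Summing over all pairs and exchanging the order of summation, each edge $e=ab\in E(G)$ is accounted for with coefficient $\sum_{P\in \mathcal{P}:\,e\in P}\|P\| = w_{\mathcal{P}}(e) \le w$, giving
$$\sum_{\{u,v\}}(x_u-x_v)^2 \le \sum_{e=ab\in E(G)} w_{\mathcal{P}}(e)\,(x_a-x_b)^2 \le w\sum_{ab\in E(G)}(x_a-x_b)^2 = w\cdot x^T L(G) x.$$

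Combining the two displays gives $n\sum_v x_v^2 \le w\cdot x^T L(G) x$, hence $\lambda_2(G)\ge n/w$. Nothing here looks subtle; the only step that needs care is the double-counting argument when swapping the order of summation, in particular verifying that the weights $\|P_{uv}\|$ match up so that the coefficient of $(x_a-x_b)^2$ is exactly $w_{\mathcal{P}}(e)$ as defined in the paper (length-weighted, not unweighted congestion). This matching is precisely why the definition of weighted congestion was chosen.
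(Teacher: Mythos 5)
Your proposal is correct and follows essentially the same route as the paper's proof: the identity $\sum_{\{u,v\}}(x_u-x_v)^2 = n\|x\|^2$ for mean-zero $x$, telescoping along each routing path with Cauchy--Schwarz, and swapping the order of summation so that each edge picks up coefficient $w_{\mathcal{P}}(e)\le w$. The only cosmetic difference is that the paper works directly with an eigenvector of $\lambda_2$ rather than invoking the Rayleigh quotient over all $x\perp\mathbf{1}$, which changes nothing of substance.
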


\begin{proof}
	Let $f: V(G) \rightarrow \mathbb{R}$ be an eigenvector of $L(G)$ corresponding to the eigenvalue $\lambda_2(G)$. Then $f$ is orthogonal to the all-one vector $\mathbf{1}$, since $\mathbf{1}$ is an eigenvector corresponding to $\lambda_1(G)$. This means that $\sum_{x\in V(G)}f(x)=0$.

	Note that we have
	\begin{equation}\label{eq:1}
		\Vert f \Vert^2
		= \frac{1}{n} \sum_{ \{x,y\} \subseteq V(G) } (f(x) - f(y))^2 ,
	\end{equation}
	because
	\begin{eqnarray*}
		2 \sum_{ \{x,y\} \subseteq V(G) } (f(x) - f(y))^2
		&=& \sum_{x,y \in V(G)} (f(x) - f(y))^2 \\
		&=& \sum_{x,y \in V(G)} (f(x)^2 + f(y)^2 - 2 f(x) f(y)) \\
		&=& 2n \sum_{x \in V(G)} f(x)^2 - 2 \biggl( \sum_{x \in V(G)} f(x) \biggr)^2 \\[1mm]
		&=& 2n \Vert f \Vert^2 .
	\end{eqnarray*}
	For any distinct vertices $x,y \in V(G)$, let $P_{xy}\in \mathcal{P}$ be the path in $\mathcal{P}$ with end points $x$ and $y$. We may assume that $\Vert f \Vert = 1$. Then,
	\begin{align*}
		\lambda_2(G)
		 & = \sum_{xy \in E(G)} (f(x) - f(y))^2                                                                                                                                                                 \\
		 & = \frac{n \sum_{xy \in E(G)}{(f(x) - f(y))^2}}{ \sum_{\{x,y\} \subseteq V(G) } (f(x) - f(y))^2}                                                            & \text{ (by (\ref{eq:1}))}               \\
		 & \ge \frac{n \sum_{xy \in E(G)}{(f(x) - f(y))^2}}{ \sum_{\{x,y\} \subseteq V(G) }{\left( \Vert P_{xy}\Vert \sum_{uv \in P_{xy}} (f(u) - f(v))^2 \right) } } & \text{ (by Cauchy–-Schwarz inequality)} \\
		 & = \frac{n \sum_{xy \in E(G)}{(f(x) - f(y))^2}}{\sum_{uv \in E(G)} w_{\mathcal{P}}(uv) (f(u) - f(v))^2}                                                                                               \\
		 & \ge \frac{n}{w}.
	\end{align*}
	The proof is complete.
\end{proof}

\section{Main Result}

Now, we prove the main result of this paper.

\begin{thm}\label{mainThm}
	Let $G$ be a graph of order $n \ge 2$. At least one of $G$ or $\Gb$ has a routing of weighted congestion at most $5n/2$.
\end{thm}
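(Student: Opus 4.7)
I plan to construct the required routing directly in one of $G$ or $\Gb$.

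\textbf{Step 1: Reduction to diameter $2$.} The classical observation that $\mathrm{diam}(G) \ge 3$ implies $\mathrm{diam}(\Gb) \le 2$ lets me assume, after replacing $G$ by $\Gb$ if necessary, that $\mathrm{diam}(G) \le 2$. I then aim to construct a routing in $G$ of weighted congestion at most $5n/2$; combined with Theorem~\ref{SystemOfPaths&Lambda} this gives $\max\{\lambda_2(G), \lambda_2(\Gb)\} \ge 2/5$.

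\textbf{Step 2: The routing.} Let $v$ be a vertex of maximum degree $d$ in $G$, and partition $V(G) \setminus \{v\}$ into $A = N_G(v)$ and $B = V(G) \setminus N_G[v]$. Since $\mathrm{diam}(G) \le 2$, every $b \in B$ has a neighbour in $A$, so I can choose $\phi : B \to A$ with $\phi(b) \in A \cap N_G(b)$ and with the preimage sizes $|\phi^{-1}(a)|$ as balanced as possible (a Hall-type averaging). Define the routing $\mathcal{P}$ by: route every edge of $G$ by itself; route every non-edge $\{x, y\} \subseteq A$ by the length-$2$ path through $v$; route every non-edge $\{v, b\}$ with $b \in B$ by the length-$2$ path through $\phi(b)$; and route every remaining non-edge $\{x, y\}$ (cross or inside $B$) through a balanced choice of common neighbour $z_{xy} \in N_G(x) \cap N_G(y)$, which exists because $\mathrm{diam}(G) \le 2$.

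\textbf{Step 3: Congestion analysis.} Every path in $\mathcal{P}$ has length at most $2$, so for each $e \in E(G)$,
\[
w_{\mathcal{P}}(e) \;\le\; 1 + 2\, N_2(e),
\]
where $N_2(e)$ counts the length-$2$ paths of $\mathcal{P}$ traversing $e$. I bound $N_2(e)$ by a case split on which of the parts $\{v\}, A, B$ the endpoints of $e$ lie in. Edges $va$ with $a \in A$ collect contributions from $A$-$A$ non-edges and from pairs $\{v, b\}$ with $\phi(b) = a$, giving $N_2(va) \le (|A| - 1) + |\phi^{-1}(a)|$. Edges $aa'$ inside $A$, cross edges $ab$, and inner-$B$ edges $bb'$ collect contributions only from non-edges whose chosen intermediate is one of their endpoints. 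With the balanced choices, each such count is of order at most $O(n/|A|)$ or $O(|B|)$, and combining the pieces (using $|A| + |B| = n-1$) yields $w_{\mathcal{P}}(e) \le 5n/2$ for every edge~$e$.

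\textbf{Step 4: Main obstacle.} The hardest case is edges inside $B$: for $B$-$B$ non-edges, $v$ is not a common neighbour, so there is no canonical hub in $G$ and the structure of $B$ in $G$ is arbitrary. The technical core of the proof is therefore a balancing statement showing that the common-neighbour choices $z_{xy}$ for $B$-$B$ and cross non-edges can be made so that no vertex is reused as an intermediate too often. If such a balanced choice fails in $G$, then $d = \Delta(G)$ must be small; in that regime the symmetric construction in $\Gb$ applies, since there $v$ is adjacent to every vertex of $B$, and the previously problematic $B$-$B$ non-edges become length-$2$ paths through $v$ in $\Gb$ with the roles of $A$ and $B$ swapped. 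Negotiating this dichotomy and pushing the constants down to exactly $5/2$ is the main technical effort.
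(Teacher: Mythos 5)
There is a genuine gap, and it is located at the very first step. Your Step~1 asserts that $\mathrm{diam}(G)\ge 3$ implies $\mathrm{diam}(\Gb)\le 2$; this is false. The correct classical statement is only that $\mathrm{diam}(G)\ge 3$ implies $\mathrm{diam}(\Gb)\le 3$ (one needs $\mathrm{diam}(G)\ge 4$ to force $\mathrm{diam}(\Gb)\le 2$). The path $P_4$ is self-complementary with diameter $3$, so both $G$ and $\Gb$ can have diameter exactly $3$ simultaneously, and in that case neither graph admits a routing by paths of length at most $2$. This is precisely the hard case of the theorem, and your proposal never addresses it. The paper handles it by induction on $n$: it picks $u,v$ at distance $3$ in $\Gb$ and $u',v'$ at distance $3$ in $G$, shows these four vertices are distinct and suitably placed in the neighbourhood partitions of each graph, deletes $v$ and $v'$, applies the inductive hypothesis to $H=G-\{v,v'\}$, and then extends whichever routing ($\mathcal{P}_H$ in $H$ or in $\Hb$) the induction provides by explicit short paths through $u$ (or $u'$), with the congestion bookkeeping closing at $5n/2$. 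Without some such mechanism your argument cannot get off the ground.

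A secondary point: even in the regime where your reduction does apply, the ``balancing'' machinery of Steps~2--4 is both unproven and unnecessary. If $\mathrm{diam}(G)\le 2$ and you route each non-adjacent pair through an \emph{arbitrary} common neighbour, then the length-$2$ paths through an edge $xy$ split into those of the form $x\,y\,w$ (so $w\notin N_G(x)$, at most $n-1-|N_G(x)|$ of them) and those of the form $y\,x\,w$ (so $w\in N_G(x)\setminus\{y\}$, at most $|N_G(x)|-1$ of them); the degree cancels and one gets $w_{\mathcal{P}}(xy)\le 1+2(n-1-|N_G(x)|)+2(|N_G(x)|-1)=2n-3$ with no balancing at all. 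So the dichotomy you describe in Step~4 (``if balancing fails then $\Delta(G)$ is small, switch to $\Gb$'') is solving a non-problem, while the actual obstruction --- both $G$ and $\Gb$ having diameter $3$ --- is left untouched.
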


\begin{proof}
	The proof is by induction on $n$. If $G$ has diameter at most 2, we select for each pair $\{x,y\}$ a path of length 1 or 2 joining $x$ and $y$.
	For any edge $e=xy\in E(G)$, the paths through $e$ are the edge $xy$, some paths from $x$ to $V(G)\setminus (N_G(x)\cup\{x\})$ and some paths from $y$ to $N_G(x)\setminus\{y\}$. Thus the weighted congestion of $e$ is at most $1 + 2(n-1-|N_G(x)|) + 2(|N_G(x)|-1) = 2n-3$.
	Thus, this routing has weighted congestion at most $2n-3$. Hence, we may assume from now on that neither $G$ nor $\Gb$ has diameter less than 3. This implies that both, $G$ and $\Gb$ have diameter exactly 3 and that $n\ge 4$.

	Let $u,v$ be vertices whose distance in $\Gb$ is 3, and let $u',v'$ have distance 3 in $G$. Then
	$uv \in E(G)$, $u'v' \in E(\Gb)$ and $N_G(u) \cup N_G(v) = N_\Gb(u') \cup N_\Gb(v') = V(G)$.
	This implies in particular that in $G$, every vertex is at distance at most 2 from $u$.
	Thus $u\notin \{u',v'\}$. By symmetry, we conclude that $\{u,v\}\cap\{u',v'\} = \emptyset$.
	Let
	\begin{align*}
		X \coloneqq N_G(u) \setminus (N_G(v) \cup \{v\}), \quad
		Y \coloneqq N_G(v) \setminus (N_G(u) \cup \{u\}), \quad
		Z \coloneqq N_G(u) \cap N_G(v), \\
		X' \coloneqq N_\Gb(u') \setminus (N_\Gb(v') \cup \{v'\}), \quad
		Y' \coloneqq N_\Gb(v') \setminus (N_\Gb(u') \cup \{u'\}), \quad
		Z' \coloneqq N_\Gb(u') \cap N_\Gb(v').
	\end{align*}
	Without loss of generality assume that $|X| \le |Y|$ and $|X'| \le |Y'|$.

	Since $u'$ and $v'$ are at distance 3 in $G$, we have
	\begin{equation}\label{eq1}
		|\{u',v'\} \cap X| = |\{u',v'\} \cap Y| = 1.
	\end{equation}
	We may thus assume that $u'\in X$ and $v'\in Y$. Similarly, we may assume that $u\in X'$ and $v\in Y'$.

	Let $H = G -\{v,v'\}$. By induction, at least one of $H$ or $\Hb$ has a routing of weighted congestion at most $5(n-2)/2$.

	Let us first assume that $H$ has such a routing $\mathcal{P}_H$.
	Let $\mathcal{A}$ be the set of following paths taken for every $z\in V(H)$:
	$$ P_{vz} = \begin{cases}
			v z   & \text{if $z \in Y \cup Z \cup \{u\}$}, \\
			v u z & \text{if $z \in X$}.
		\end{cases} $$

	We have
	$$ w_{\mathcal{A}}(e) = \begin{cases}
			1        & \text{if $e = vz$ and $z \in Y \cup Z$}, \\
			2|X| + 1 & \text{if $e = v u$},                     \\
			2        & \text{if $e = u z$ and $z \in X$},       \\
			0        & \text{otherwise}.
		\end{cases} $$

	Let $\mathcal{B}$ be the following set of paths taken for each $z\in V(H)\cup \{v\}$:
	$$ P_{v'z} = \begin{cases}
			v' z     & \text{if $z=v$},                       \\
			v' v z   & \text{if $z \in Y \cup Z \cup \{u\}$}, \\
			v' v u z & \text{if $z \in X$}.
		\end{cases} $$
	We have
	$$ w_{\mathcal{B}}(e) = \begin{cases}
			2n-3+|X| & \text{if $e = v' v$},                                       \\
			2        & \text{if $e = v z$ and $z \in (Y \cup Z)\setminus\{v'\} $}, \\
			3|X| + 2 & \text{if $e = v u$},                                        \\
			3        & \text{if $e = u z$ and $z \in X$},                          \\
			0        & \text{otherwise}.
		\end{cases} $$
	The set of paths $ \mathcal{P} = \mathcal{P}_H \cup \mathcal{A} \cup \mathcal{B} $ is a routing in $G$ with weighted congestion at most $5n/2$ since $|X| \le \frac{n-2}{2}$ and $w(\mathcal{P}_H) \le 5(n-2)/2$. Note that all we needed for this conclusion was that we had a routing in $H$ and that $v'\in Y$ (since we used the fact that $|X|\le |Y|$ to conclude that $|X|\le (n-2)/2$).

	Suppose now that the requested routing exists in $\Hb$. Since $v\in Y'$, the same proof as above shows that we can obtain a routing in $\Gb$ with weighted congestion at most $5n/2$. This completes the proof.
\end{proof}

\begin{proof}[Proof of Theorem \ref{thm:main}]
	By Theorems~\ref{SystemOfPaths&Lambda} and \ref{mainThm}, every graph $G$ of order $n\ge2$ satisfies:
	\begin{equation*}
		\max\left\{\lambda_2(G), \lambda_2(\Gb)\right\}
		\ge \frac{n}{5n/2} = \frac{2}{5}
	\end{equation*}
	which is what we were to prove.
\end{proof}

At the end, we pose the following question:

\medskip

\noindent
\textbf{Question.} {\it What is the supremum of all real numbers $c$ such that for any graph $G$ of order at least 2,}
$$ \max\left\{\lambda_2(G), \lambda_2(\Gb)\right\} \ge c.$$

\medskip

The path $P_4$ (which is self-complementary) has $\lambda_2(P_4) = 2-\sqrt{2} < 0.5858$. This shows that the supremum will be smaller than $0.5858$.

\bibliographystyle{amsplain}
%

\end{document}